\def\@begintheorem#1#2{\list{}{\thm@body}%
  \item[]{\bf #1~#2.}\quad\it\ignorespaces}
\def\@opargbegintheorem#1#2#3{\list{}{\thm@body}%
  \item[]{\bf #1~#2~\ifrembrks #3\global\rembrksfalse\else (#3)\fi.}%
  \quad\it\ignorespaces}
\def\@endtheorem{\endlist}
\newtheorem{theorem}{Theorem}
\newenvironment{proof}
{\begin{trivlist}\item[]{{\it Proof.}}}{\eop\noindent\end{trivlist}}
\newcommand{\eop}{\hfill{$\Box$}}
\begin{document}
\date{}
  \title{Regular finite planar maps with equal edges\\
  {\small-- Memorandum 1982-12 --}\\
  {\small(July 1982; retyped and slightly edited in December 2009 by Sascha Kurz)}}
  \author{{\sc Aart Blokhuis}\thanks{aartb@win.tue.nl}\\ 
  University of Technology\\
  Department of Mathematics and Computing Science\\
  PO Box 513, Eindhoven\\ 
  The Netherlands}
  \maketitle
  
  \noindent
  \rule{\textwidth}{0.3 mm}
  \begin{abstract}
    \noindent
    There doesn't exists a finite planar map with all edges having the same length, and each vertex on exactly $5$~edges.
  \end{abstract}
  \noindent
  \rule{\textwidth}{0.3 mm}

\section{Introduction}

At the $1981$~meeting for Discrete Geometry in Oberwolfach, H.~Harborth posed the following problem: Is it possible to put a finite set of match-sticks in the plane such that in each endpoint a constant number $k$ of matches meet, and no two match-sticks overlap? Also if possible, what is the minimum number of match-sticks in such a configuration. He proceeded to give minimal examples for $k=2$, $k=3$, see Figure~\ref{fig_min_ex_2_3}, and a possibly non-minimal example for $k=4$, see Figure~\ref{fig_min_ex_4}.

\begin{figure}[htp]
\begin{center}
\includegraphics{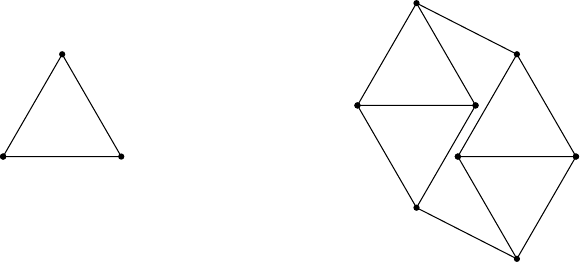}
\caption{Minimal examples of regular match-stick graphs for $k=2$ and $k=3$.}
\label{fig_min_ex_2_3}
\end{center}
\end{figure}

\begin{figure}[htp]
\begin{center}
\includegraphics{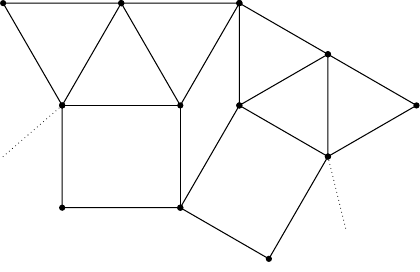}
\caption{The smallest known regular match-stick graph for $k=4$.}
\label{fig_min_ex_4}
\end{center}
\end{figure}

For $k\ge 6$ there exist no finite regular map of valency $k$ by a consequence of Euler's theorem: $|V|-|E|+|F|=2$, where $V$ denotes the set of vertices, $E$ the set of edges, and $F$ the set of faces.

For $k=5$ there do exist finite regular maps, the smallest one is the graph of the icosahedron, see Figure~\ref{fig_icosahedron}, but it is not possible to draw it in such a way that all edges have the same length.

\begin{figure}[htp]
\begin{center}
\includegraphics{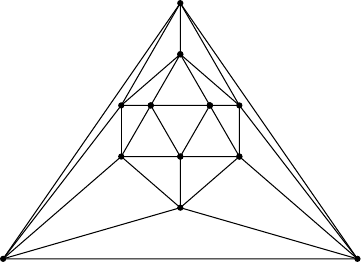}
\caption{The smallest $5$-regular planar map -- the icosahedron.}
\label{fig_icosahedron}
\end{center}
\end{figure}

\noindent
We will show that this is true for all finite planar graphs that are regular of degree~$5$.

\begin{theorem}
  No finite planar map with straight edges of equal length exists that is regular of degree~$5$.
\end{theorem}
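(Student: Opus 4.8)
The plan is to combine Euler's formula -- which forces the map to be nearly a triangulation -- with the rigidity of unit-length triangles (they must be equilateral) and with a convex-hull argument.

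First I would record the combinatorial skeleton. A disconnected counterexample has a connected component that is still a counterexample, so assume $G$ is connected; then handshaking gives $2|E| = 5|V|$, i.e.\ $|E| = \tfrac52|V|$, and Euler's formula gives $|F| = \tfrac32|V| + 2$. Since every face has at least three sides, $\sum_{f\in F}(|f|-3) = 2|E| - 3|F| = \tfrac12|V| - 6 \ge 0$; the left-hand side is at least the number of faces with four or more sides, so the number $t$ of triangular faces satisfies $t \ge |V| + 8$. Writing $t(v)$ for the number of triangular faces at $v$, one has $\sum_v t(v) = 3t$, and $t(v) \le 4$ since five equilateral triangles around $v$ would make the angles at $v$ sum to $\tfrac53\pi \ne 2\pi$; hence $\sum_v (6 - t(v)) = 6|V| - 3t \le 3|V| - 24$ while every summand is at least $2$, so at least $24$ vertices carry \emph{exactly} four triangular faces.

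The metric input is then twofold. (i) Each triangular face, having unit sides, is equilateral and so contributes exactly $\pi/3$ at each corner, while the five face-angles at any vertex sum to $2\pi$; consequently at a vertex $v$ with four triangular faces those faces are angularly consecutive and fill $\tfrac43\pi$, so the five edges at $v$ point in five of the six directions $\phi + k\pi/3$, and in particular two disjoint pairs of edges at $v$ are collinear. (ii) Let $w$ be a corner of the convex hull of the vertex set (which is non-degenerate, as collinear vertices would force degree at most $2$). Every edge of $G$ at $w$ enters the interior cone at $w$, of opening $\beta_w < \pi$, so the four bounded faces at $w$ have angles summing to less than $\pi$; since triangular faces contribute exactly $\pi/3$ each, at most two of these four faces are triangular, so $w$ lies on at least two non-triangular bounded faces, and in particular no hull corner is one of the four-triangle vertices above.

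It remains to weld these facts into a contradiction, and this is where I expect the real difficulty to lie. The heuristic is that the many four-triangle vertices, together with the equilateral fans forced around them, reconstitute large flat patches of the triangular lattice, yet $5$-regularity -- four, not six, triangles at such a vertex -- gives the map a strictly positive ``triangular angular defect'' $6 - t(v) \ge 2$ (in units of $\pi/3$) at \emph{every} vertex, and a figure drawn flat in the plane, with boundary, ought not be able to support positive curvature at all of its vertices. Making this rigorous should amount to a careful Gauss--Bonnet / angle-defect count over the union $T$ of all triangular faces together with its boundary $\partial T$, using the convex-hull observation (at most two triangular faces per hull corner) to control the boundary contribution. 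The obstacle is that none of the local statements is contradictory in isolation -- each is consistent with a small convex hull, or with local pieces of a positively curved triangulation such as the icosahedron -- so the work is a global accounting that couples the flatness of the drawing to the surplus of triangular faces without leakage.
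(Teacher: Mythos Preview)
Your Euler computation and the bound $t(v)\le 4$ are correct and match the paper's opening moves; so is the convex-hull observation that an extreme point meets at most two bounded triangular faces. But the proposal is, by your own wording, unfinished: you stop at ``it remains to weld these facts into a contradiction'' and offer only a heuristic for the rest.

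That heuristic does not close. The quantity $6-t(v)$ is not a curvature that Gauss--Bonnet can constrain: the drawing is flat, so the genuine angular defect at every interior vertex is zero, the shortfall $2\pi - t(v)\cdot\tfrac{\pi}{3}$ being absorbed \emph{exactly} by the angles of the non-triangular faces at $v$. If you run Gauss--Bonnet over the triangle union $T$, the four-triangle vertices become reflex corners of $\partial T$ with interior angle $\tfrac{4\pi}{3}$, but nothing stops $\partial T$ from having enough convex corners (every vertex with $t(v)\in\{1,2\}$ supplies some) to balance the identity; your hull bound feeds in only at the handful of extreme points, far too few to control the rest of $\partial T$. The ``global accounting\ldots without leakage'' that you flag as the obstacle is indeed the entire problem, and the outline does not supply a mechanism for it.

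The paper performs exactly such an accounting, but as a discharging argument rather than a curvature one, and with two concrete devices absent from your sketch. First, instead of the crude triangle/non-triangle split it assigns each $i$-gon weight $10-3i$ and distributes it to its corners, setting $f(v)=\sum_i(10-3i)f_i(v)/i$ so that $\sum_v f(v)=20$; the only positive vertex types are four triangles plus a quadrilateral ($f=\tfrac{5}{6}$) and four triangles plus a pentagon ($f=\tfrac{1}{3}$). Second---and this is the step with no analogue in your plan---the quadrilateral at such a vertex is forced to be a unit rhombus with a $\tfrac{2\pi}{3}$ angle there, hence a diamond, and the paper \emph{inserts its short (unit-length) diagonal as a new edge}. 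This raises some degrees to $6$ or $7$ but eliminates the quadrilateral case entirely; after a correction term $-2(d(v)-5)$ one has $\sum_v\widetilde f(v)=20$ with the sole surviving positive type being four triangles plus a pentagon. A local case analysis then shows that any pentagon $P$ can have at most three vertices of this good type (two adjacent such vertices force $P$ to sit on a chain of three equilateral triangles, which pins down the remaining angles), whence $\sum_{v\in P}\widetilde f(v)/f_5(v)\le 3\cdot\tfrac{1}{3}+2\cdot(-\tfrac{1}{2})=0$. Summing over pentagons yields $\sum_v\widetilde f(v)\le 0$, the desired contradiction. The convex hull plays no role.
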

\begin{proof}
  Let $V$ denote the set of vertices, $E$ the set of edges, and $F$ the set of faces of a planar map. We then have
  Euler's relation:
  \begin{equation}
    \label{eq_1}
    |V|-|E|+|F|=2.
  \end{equation}
  If, furthermore each point is on $5$~edges then
  \begin{equation}
    \label{eq_2}
    5|V|=2|E|.
  \end{equation}
  Write $F_i$ for the set of faces with $i$~sides, then
  \begin{equation}
    \label{eq_3}
    |F|=\sum\limits_{i=3}^\infty |F_i|=|F_3|+|F_4|+\dots
  \end{equation}
  and
  \begin{equation}
    \label{eq_4}
    2|E|=\sum\limits_{i=3}^\infty i|F_i|=3|F_3|+4|F4|+\dots\,.
  \end{equation}
  We may combine (\ref{eq_2}), (\ref{eq_3}) and (\ref{eq_4}) to get
  \begin{equation}
    \label{eq_5}
    \sum\limits_{i=3}^\infty (10-3i)|F_i|=|F_3|-2|F_4|-5|F_5|-8|F_6|-\dots=20.
  \end{equation}
  For any vertex $v\in V$ we define
  \begin{eqnarray}
    f_i(v) &=& \#\text{ $i$-gonal faces containing $v$},\nonumber\\
    f(v) &=& \sum\limits_{i=3}^\infty \frac{(10-3i)f_i(v)}{i}=\frac{f_3(v)}{3}-\frac{2f_4(v)}{4}
    -\frac{5f_5(v)}{5}-\dots\,.\label{eq_6}
  \end{eqnarray}
  From (\ref{eq_5}), (\ref{eq_6}) and $\sum\limits_{v\in V}\frac{f_i(v)}{i}=|F_i|$ we obtain
  \begin{equation}
    \label{eq_7}
    \sum\limits_{v\in V} f(v)=20.
  \end{equation}
  From now on, we assume that the edges in the map all have the same length. A point is then surrounded by at most $4$~triangles,
  and the only possibilities for a point $v\in V$, making a positive contribution to $\sum\limits_{v\in V} f(v)$ are
  configurations of either four triangles plus a tetragon, or four triangles plus a pentagon, see
  Figure~\ref{fig_positive_contribution}.
\begin{figure}[htp]
\begin{center}
\includegraphics{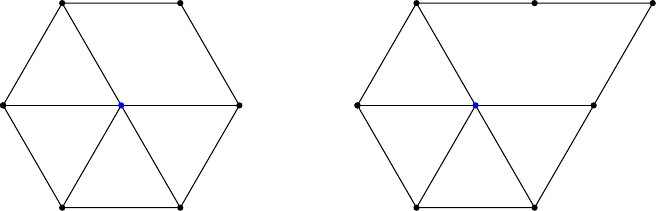}
\caption{Vertex-configurations with a positive contribution.}
\label{fig_positive_contribution}
\end{center}
\end{figure}
  In the first case we have $f(v)=4\cdot\frac{1}{3}-\frac{2}{4}=\frac{5}{6}$ and in the second case we have
  $f(v)=4\cdot\frac{1}{3}-\frac{5}{5}=\frac{1}{3}$. We will show that the positive contribution is killed by
  the surrounding points, yielding $\sum\limits_{v\in V} f(v)\le 0$, which is clearly a contradiction.
  
  First we define a modified map: we add the diagonal in diamonds as in Figure~\ref{fig_diamond}: thus
  producing two equilateral triangles.
\begin{figure}[htp]
\begin{center}
\includegraphics{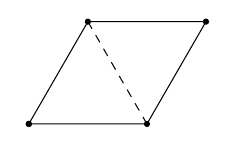}
\caption{Adding a diagonal to a diamond.}
\label{fig_diamond}
\end{center}
\end{figure}
  The effect upon $\sum\limits_{v\in V} f(v)$ is as follows: $f(v_1)$ and $f(v_2)$ are increased by $\frac{1}{3}+\frac{1}{2}$,
  $f(v_3)$ and $f(v_4)$ are increased by $2\cdot\frac{1}{3}+\frac{1}{2}$; therefore each added diagonal produces an increment
  of~$4$:
  \begin{equation}
    \label{eq_8}
    \sum\limits_{v\in V} f(v)=20+4\times(\#\text{ added diagonals}).
  \end{equation}
  After the addition of extra diagonals some points now may have valency~$6$ or $7$.
  (At most two of the five faces at a point can be diamonds.) Let us denote the number of points of valency~$6$ by
  $v_6$ and the number of points of valency~$7$ by $v_7$. With this we have the relation
  \begin{equation}
    \label{eq_9}
    2\times(\#\text{ added diagonals})=v_6+2v_7.
  \end{equation}
  Together with~(\ref{eq_8}) this gives:
  \begin{equation}
    \label{eq_10}
    \sum\limits_{v\in V} f(v)\,-2v_6-4v_7=20.
  \end{equation}
  The contribution of points with valency~$6$ or~$7$ to the left hand side of this relation is non-positive, which shows that
  we may limit our considerations to points that are part of a pentagon, since all other points do not make a positive
  contribution. As defined before $F_5$ denotes the set of pentagonal faces. Let us denote by $V(F_5)$ the set of points
  contained in a pentagonal face.
  
  Let $\widetilde{f}(v)=f(v)-2(d(v)-5)$, i.~e.{} $\widetilde{f}(v)=f(v)$ for vertices of valency~$5$, $\widetilde{f}(v)=f(v)-2$
  for vertices of valency~$6$, and $\widetilde{f}(v)=f(v)-4$ for vertices of valency~$7$, where $d(v)$ is the degree of vertex~$v$.
  With this we can rewrite relation~(\ref{eq_10}) as
  $$
    \sum\limits_{v\in V} \widetilde{f}(v)=20
  $$
  or, separating pentagonal points and non-pentagonal points:
  \begin{equation}
    \label{eq_11}
    \sum\limits_{v\in V\backslash V(F_5)} \widetilde{f}(v)\,+\,\sum\limits_{v\in V(F_5)} \widetilde{f}(v)=20.
  \end{equation}
  Since $\widetilde{f}(v)\le 0$ for all $v\in V\backslash V(F_5)$ we will now investigate
  $$
    \sum\limits_{v\in V(F_5)} \widetilde{f}(v)=\sum\limits_{P\in F_5}\sum\limits_{v\in P} \frac{\widetilde{f}(v)}{f_5(v)}.
  $$
  We will finish the proof by showing that
  $$
    \sum\limits_{v\in P} \frac{\widetilde{f}(v)}{f_5(v)}\le 0
  $$
  for all pentagons $P\in F_5$.
  
  Now let us classify the situations where $\frac{\widetilde{f}(v)}{f_5(v)}>-\frac{1}{2}$. If $f_5(v)\ge 2$ then we have
  $$\frac{\widetilde{f}(v)}{f_5(v)}\le \frac{1}{f_5(v)}\cdot\left(f_5(v)\cdot 
  -\frac{5}{5}+\Big(d(v)-f_5(v)\Big)\cdot\frac{1}{3}-2(d(v)-5)\right)\le-\frac{1}{2}.$$
  Thus it remains to consider the case $f_5(v)=1$, where we have
  \begin{eqnarray*}
    \frac{\widetilde{f}(v)}{f_5(v)}&\le& -\frac{5}{5}+f_3(v)\cdot\frac{1}{3}+\Big(d(v)-f_3(v)-1)\Big)\cdot -\frac{2}{4}-2(d(v)-5)\\
    &=& \frac{5f_3(v)}{6}+\frac{19}{2}-\frac{5d(v)}{2}.
  \end{eqnarray*}
  Since $f_3(v)\le d(v)-1$ vertices with $d(v)\ge 6$ fulfill $\frac{\widetilde{f}(v)}{f_5(v)}\le -\frac{4}{3}$. For $d(v)=5$,
  $f_3(v)\le 3$ we have $\frac{\widetilde{f}(v)}{f_5(v)}\le -\frac{1}{2}$ and for $d(v)=5$, $f_3(v)=4$ we have
  $\frac{\widetilde{f}(v)}{f_5(v)}=\frac{1}{3}$.
  
  Thus the only configuration with positive $\frac{\widetilde{f}(v)}{f_5(v)}$ consists of four triangles
  and a pentagon. Now suppose that at least three vertices $v$ of $P$ are of this type. In this case two of them have to
  be neighbored, let us call them $v_1$ and $v_2$, and the pentagon~$P$ has to be in the shape of a chain of three
  equilateral triangles, see Figure~\ref{fig_pentagon}. 
\begin{figure}[htp]
\begin{center}
\includegraphics{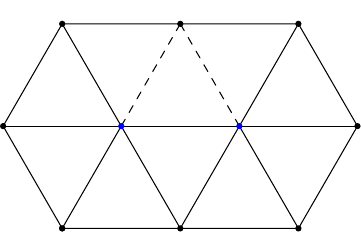}
\caption{A pentagon with at least three vertices fulfilling $\frac{\widetilde{f}(v)}{f_5(v)}>-\frac{1}{2}$.}
\label{fig_pentagon}
\end{center}
\end{figure}
  Additionally only that vertex of $P$ which is not neighbored to either $v_1$ or
  to $v_2$ can have $\frac{\widetilde{f}(v)}{f_5(v)}>-\frac{1}{2}$. So we finally conclude
  $$
    \sum\limits_{v\in P} \frac{\widetilde{f}(v)}{f_5(v)}\le \max\limits_{x\in\{0,1,2,3\}}
    \left\{ x\cdot\frac{1}{3}+(5-x)\cdot-\frac{1}{2}\right\}=0.
  $$
\end{proof}

\medskip

\noindent
We would like to remark that the proof can be slightly modified to prove that there doesn't exist a  finite planar map with minimum degree~$5$ and all edges having the same length. Instead of Equation~(\ref{eq_5}) we then obtain
\begin{eqnarray}
  && \sum\limits_{i=3}^\infty (10-3i)|F_i|=20+\sum\limits_{i=6}^\infty 2(i-5)v_i\\
  &\Longleftrightarrow&  |F_3|-2|F_4|-5|F_5|-8|F_6|-\dots=20+2v_6+4v_7+6v_8+\dots.\nonumber
\end{eqnarray}
With this we define
\begin{equation}
  \hat{f}(v) =\sum\limits_{i=3}^\infty \frac{(10-3i)f_i(v)}{i}-2\Big(d(v)-5\Big)=10-2d(v)+\frac{f_3(v)}{3}-\frac{2f_4(v)}{4}
    -\frac{5f_5(v)}{5}-\dots
  \end{equation}
for each vertex $v\in V$ and obtain $\sum\limits_{v\in V} \hat{f}(v)=20$. Since we may add edges without violating either the degree condition or the condition on the edge lengths,  we can assume that the planar map does not contain any diamonds. As in the proof above $\hat{f}(v)>0$ is only possible if $f_5(v)>0$. The proof is finished by showing that
\begin{equation}
  \sum\limits_{v\in P} \frac{\hat{f}(v)}{f_5(v)}\le 0
\end{equation}
holds for all pentagons $P\in F_5$.

\section*{Epilog}
Another proof that there does not exist a $5$-regular matchstick graph is published in \cite{short}.

\providecommand{\href}[2]{#2}

\end{document}